\newcommand{\E}{\mathbb{E}}
\theoremstyle{plain}
\newtheorem{thm}{Theorem}
\newtheorem{lemma}{Lemma}
\theoremstyle{remark}
\numberwithin{equation}{section}
\title{Bounds for the expected value of one-step processes}
\author[B. Armbruster]{Benjamin Armbruster}
\author[\'A. Besenyei]{\'Ad\'am Besenyei}
\author[P. L. Simon]{P\'eter L. Simon}
\address{Department of Industrial Engineering and Management Sciences,
Northwestern University}
\email{armbruster@northwestern.edu}
\address{Institute of Mathematics, E\"otv\"os Lor\'and University Budapest\\
and Numerical Analysis and Large Networks Research Group, Hungarian Academy of
Sciences}
\email{badam@cs.elte.hu}
\email{simonp@cs.elte.hu}
\date{\today}
\subjclass[2000]{60J75;34C11;92D30}
\keywords{mean-field model; exact bounds; one-step processes; ODE}
\thanks{\'A. B. and P.L.S were supported by OTKA grant no. 81403.}
\begin{document}

\begin{abstract}
Mean-field models are often used to approximate Markov processes with large state-spaces.  One-step processes, also known as birth-death processes, are an important class of such processes and are processes with state space $\{0,1,\ldots,N\}$ and where each transition is of size one.  We derive explicit bounds on the expected value of such a process, bracketing it between the mean-field model and another simple ODE.  While the mean-field model is a well known approximation, this lower bound is new, and unlike an asymptotic result, these bounds can be used for finite $N$.  Our bounds require that the Markov transition rates are density dependent polynomials that satisfy a sign condition.  We illustrate the tightness of our bounds on the SIS epidemic process and the voter model.
\end{abstract}

\maketitle

\section{Introduction}

Mean-field approximations of stochastic processes are crucial in many areas of science, where a large system is described by a stochastic process and its expected behaviour is approximated by a simpler mean-field model using a system of differential equations. The stochastic processes of particular interest to us are binary network processes, where each node of a large network can be in one of two states and the state of a node changes depending on the states of the neighbouring nodes.  The theory of their mathematical modelling can be found in several books and review papers \cite{Barratetal, Danon, Nekovee, Newmanetal}. The two well-known examples we analyze in Section 4 are epidemic and rumour spreading.


For concreteness we consider a continuous time Markov process, $X(t)$, with states $k\in\{0,1,\ldots,N\}$ and let $p_k(t)=P[X(t)=k]$ denote the discrete probability distribution of $X(t)$.  The time evolution of $p_k(t)$ is described by a linear system of differential equations, called the master equations. There is a well established theory for solving linear systems by expressing their solution as $\exp(At)$, where $A$ is the transition rate matrix of the system. It is also well-known that the matrix exponential is hard to compute for large matrices, hence methods have been developed exploiting any special structure of the matrix. Recently, an extremely powerful method has been worked out for tridiagonal matrices by Smith and Shahrezaei \cite{SmithShahrezaei}. Since one-step processes have tridiagonal transition matrices, the time dependence of the probabilities $p_k(t)$ can be computed very efficiently. Besides computational methods there are theoretical approaches to approximate, estimate and characterize qualitatively the time dependence of the probability distribution $p_k$ and its moments (mainly its expected value). The typical approach is to introduce a low dimensional system of non-linear ODEs called the mean-field equations. These are not only faster to solve computationally but also allow for analysis giving a better qualitative understanding of the system.

The accuracy of mean-field approximations is not obvious. The problem of rigorously linking exact stochastic models to mean-field approximations goes back to the early work of Kurtz \cite{Kurtz1970} (see \cite{EthierKurtz} for a more recent reference). He studied density-dependent Markov processes and proved their stochastic convergence to the deterministic mean-field model. It can be shown that the difference between the solution of the mean-field equation and the expected value of the process is of order $1/N$ as $N$ tends to infinity \cite{BaKiSiSi}.

However, it is natural to look for actual lower and upper bounds that can be used for finite $N$ (in contrast to the previous asymptotic results). It is known that in many cases, the mean-field model yields an upper bound on the expected value of the process. The first lower bound is by Armbruster and Beck \cite{ArBe} where a system of two ODEs proves a lower bound on the expected value in the case of a susceptible-infected-susceptible (SIS) epidemic model on a complete graph. The aim of this paper is to extend these results to a wider class of Markov chains, including several which approximate network processes.

We develop bounds for \emph{one-step processes}, also known as \emph{birth-death processes}, that form an important class of continuous time Markov processes, for which the state space is $\{ 0,1, \ldots , N\}$ and it is assumed that transition from state $k$ is possible only to states $k-1$ at rate $c_k$ and to state $k+1$ at rate $a_k$. The master equations are
\begin{equation}
    p_k'= a_{k-1}p_{k-1}-(a_{k}+c_{k})p_k+c_{k+1}p_{k+1},\quad k=0,\ldots,  N , \label{eqKolm}
\end{equation}
where of course $a_{-1}=a_N=c_0=c_{N+1}=0$.
The bounds we develop are the solutions of simple ODEs and will hold for all $N$.  We will illustrate with numerical examples that the upper and lower bounds are remarkably close to each other.

In Section 2 we set up the problem, give the mean-field equation and the approximating system, state the main result, and state the tools used in the proof.  Section 3 proves the main result.  In Section 4 we illustrate the performance of the bounds on two examples: an SIS epidemic with and without airborne infection, and a voter-like model. Section 5 concludes and discusses directions for future research.


\section{Formulation of the main result}

\subsection{Differential equations for the moments}
We first establish the differential equations for the moments of the process $X(t)$. We focus on the fraction $X/N$ since we are interested in situations with large $N$ and density dependent coefficients. By definition, the $n$-th moment of $(X/N)$ is
\begin{equation}\label{ynODE}
y_n(t) := \E[(X(t)/N)^n]=\sum_{k=0}^N (k/N)^n p_k(t)\quad (n=0,1,\dots).
\end{equation}
Of course $y_0=1$.
The following equations for $y_n'$ can be derived using Lemma 2 in \cite{BaKiSiSi}, using the Kolmogorov backward equations, or taking the time derivative of \eqref{ynODE} and substituting \eqref{eqKolm}.
\[
y'_n = \frac{1}{N^n} \sum_{k=0} ^N \left( a_k ( (k+1)^n-k^n) + c_k ( (k-1)^n-k^n) \right) p_k .
\]
We are interested in the case when $a_k/N$ and $c_k/N$ are polynomials of $k/N$. Then, $y'_n$ can be expressed in terms of the coefficients of these polynomials. In fact, the following result was proved in \cite{BaKiSiSi}.

\begin{lemma}\label{momlem}
Let
\begin{equation}\label{akck}
\frac{a_k}N=A(k/N) \quad\text{and}\quad\frac{c_k}N=C(k/N)
\end{equation}
with polynomials $A(x)=\sum_{j=0}^mA_jx^j$ and $C(x)=\sum_{j=0}^mC_jx^j$ such that $A(1)=0$ and $C(0)=0$.
Then
\begin{align}\label{ysystem1}
y_1'&=\sum_{j=0}^mD_jy_j,\\\label{ysystem2}
y_n'&=n\sum_{j=0}^mD_jy_{n+j-1}+\frac1NR_n\quad (n=2,3,\dots)
\end{align}
where $D_j=A_j-C_j$, $0\leq R_n\leq \frac{n(n-1)}2 c$, and $c=\sum_{j=0}^m (|A_j|+|C_j|)$.
\end{lemma}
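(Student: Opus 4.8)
The plan is to start from the displayed formula for $y_n'$ just above the lemma, substitute the density-dependent forms \eqref{akck}, and split off a leading term plus a lower-order remainder. The natural split comes from the first-order Taylor expansion of $t\mapsto t^n$: setting $S_n(k):=(k+1)^n-k^n-nk^{n-1}$ and $U_n(k):=(k-1)^n-k^n+nk^{n-1}$, so that $(k+1)^n-k^n=nk^{n-1}+S_n(k)$ and $(k-1)^n-k^n=-nk^{n-1}+U_n(k)$, one obtains
\[
a_k\bigl((k+1)^n-k^n\bigr)+c_k\bigl((k-1)^n-k^n\bigr)=nk^{n-1}(a_k-c_k)+a_kS_n(k)+c_kU_n(k).
\]
First I would handle the leading term. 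Since $a_k-c_k=N\sum_{j=0}^m D_j(k/N)^j$, substituting and reindexing through the definition \eqref{ynODE} of the moments turns $\frac1{N^n}\sum_{k}nk^{n-1}(a_k-c_k)p_k$ into exactly $n\sum_{j=0}^m D_j y_{n+j-1}$. When $n=1$ one has $S_1\equiv U_1\equiv 0$, so there is no remainder and \eqref{ysystem1} follows at once; for $n\ge 2$ the remainder is $\frac1N R_n:=\frac1{N^n}\sum_{k}\bigl(a_kS_n(k)+c_kU_n(k)\bigr)p_k$, and it remains to bound $R_n$.

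The heart of the matter is estimating $S_n$ and $U_n$ on the range $k\in\{0,\dots,N\}$. Applying Taylor's theorem with Lagrange remainder to $f(t)=t^n$ about $x=k/N$ with increment $\pm\frac1N$ gives
\[
S_n(k)=\tfrac{n(n-1)}2\,\xi^{\,n-2}N^{n-2},\qquad U_n(k)=\tfrac{n(n-1)}2\,\eta^{\,n-2}N^{n-2}
\]
for some $\xi\in\bigl(\tfrac kN,\tfrac{k+1}N\bigr)$ and $\eta\in\bigl(\tfrac{k-1}N,\tfrac kN\bigr)$. The decisive point is that these intermediate values lie in $[0,1]$, whence $\xi^{\,n-2}\in[0,1]$ and $\eta^{\,n-2}\in[0,1]$, and therefore $0\le S_n(k)\le\tfrac{n(n-1)}2N^{n-2}$ and $0\le U_n(k)\le\tfrac{n(n-1)}2N^{n-2}$: for $\xi$ this needs $k\le N-1$, and the excluded case $k=N$ is harmless because $a_N=0$; for $\eta$ it needs $k\ge 1$, and $k=0$ is harmless because $c_0=0$. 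These are exactly the hypotheses $A(1)=0$ and $C(0)=0$.

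To finish, nonnegativity of $a_k,c_k,p_k$ together with $S_n,U_n\ge 0$ gives $R_n\ge 0$, while the upper estimates yield
\[
\tfrac1N R_n\le\frac{n(n-1)}2\cdot\frac1{N^2}\sum_{k=0}^N(a_k+c_k)p_k=\frac{n(n-1)}2\cdot\frac1N\sum_{k=0}^N\bigl(A(k/N)+C(k/N)\bigr)p_k.
\]
Since $A(k/N)+C(k/N)=(a_k+c_k)/N$ lies in $[0,c]$ with $c=\sum_{j=0}^m(|A_j|+|C_j|)$ (using $k/N\in[0,1]$), the last sum is at most $c\sum_k p_k=c$, so $R_n\le\tfrac{n(n-1)}2 c$. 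I expect the only genuinely delicate step to be the control of the Taylor intermediate points $\xi$ and $\eta$ — pinning them inside $[0,1]$ — and this is precisely where $A(1)=0$ and $C(0)=0$ are used; the rest is routine bookkeeping with the moment definitions.
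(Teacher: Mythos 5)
Your proof is correct and complete: starting from the displayed moment equation above the lemma, the split into the linear term $nk^{n-1}(a_k-c_k)$ plus the second-order Taylor remainders $S_n,U_n$, the reindexing giving $n\sum_j D_j y_{n+j-1}$, and the use of $A(1)=0$ (so $a_N=0$) and $C(0)=0$ (so $c_0=0$) to keep the intermediate points $\xi,\eta$ in $[0,1]$ all check out, yielding exactly $0\leq R_n\leq\frac{n(n-1)}2c$. The paper itself does not prove this lemma but cites it from \cite{BaKiSiSi}; your argument is the standard one implicit there, so nothing further is needed.
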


\subsection{Mean-field equation and an approximating system}
Our mean-field equation
\begin{equation}\label{meanf}
y'=\sum_{j=0}^mD_jy^j,\quad y(0)=y_1(0),
\end{equation}
is motivated by the approximations $\frac1NR_n\approx0$ and $y_n\approx y_1^n$ for large $N$,
where the second approximation essentially assumes $X/N$ is deterministic. We are interested whether the solution $y$ of the mean-field equation converges uniformly on $[0,T]$ to $y_1$ as $N\to\infty$. Recently in \cite{ArBe}, the following special case was proved in an elementary way.
\begin{thm}\label{arbethm}
Let $m=2$, $D_0=0$, $D_1=\tau-\gamma$, and $D_2=-\tau$ with positive constants $\tau$ and $\gamma$.
If $y(0)=y_1(0)=u\in [0,1]$ is fixed, then for any fixed $T>0$,
\[y_1(t)\leq y(t)\quad \text{ for } t\in[0,T],\]
and
\[\lim_{N\to\infty}|y(t)-y_1(t)|=0\quad \text{uniformly in } t\in [0,T].\]
\end{thm}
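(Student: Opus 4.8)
The plan is to prove the two assertions separately, using the moment equations from Lemma 1.1 as the backbone. Under the hypotheses, $A(x) + C(x)$ has degree $m=2$, and since $D_0=0$ the equation \eqref{ysystem1} reads $y_1' = D_1 y_1 + D_2 y_2 = (\tau-\gamma)y_1 - \tau y_2$, while the mean-field equation \eqref{meanf} is $y' = (\tau-\gamma)y - \tau y^2$.

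\textbf{Upper bound.} First I would establish the key fact that $y_2 \geq y_1^2$ for all $t$; this is just Jensen's inequality (or the Cauchy--Schwarz inequality), since $y_2 = \E[(X/N)^2] \geq (\E[X/N])^2 = y_1^2$. Given this, and because the coefficient $D_2 = -\tau$ is negative, we get
\[
y_1' = (\tau-\gamma)y_1 - \tau y_2 \leq (\tau-\gamma)y_1 - \tau y_1^2 =: f(y_1).
\]
Since $y$ solves $y' = f(y)$ exactly with the same initial value $y(0) = y_1(0)$, a standard differential inequality / comparison argument (Grönwall-type, using that $f$ is Lipschitz on the relevant bounded interval, and that $y_1, y$ both stay in $[0,1]$) yields $y_1(t) \leq y(t)$ on $[0,T]$. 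One should first check that the process stays in $[0,1]$ so that all quantities are well-defined and the Lipschitz bound on $f$ is uniform; this follows from the structure of the state space $\{0,\dots,N\}$.

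\textbf{Uniform convergence.} For the limit statement, the idea is to control the gap $g := y - y_1 \geq 0$. Subtracting the two equations,
\[
g' = (\tau-\gamma)g - \tau(y^2 - y_2) = (\tau-\gamma)g - \tau(y^2 - y_1^2) - \tau(y_1^2 - y_2).
\]
Write $y^2 - y_1^2 = (y+y_1)g$ so the first two terms contribute a factor bounded in absolute value by a constant $L$ (since $0\le y,y_1\le 1$), and the last term is $\tau(y_2 - y_1^2) = \tau\operatorname{Var}(X/N) \geq 0$, which is the ``source'' driving the gap. The crucial quantitative input is a bound on the variance: one needs $y_2 - y_1^2 = O(1/N)$ uniformly on $[0,T]$. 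This can be obtained from the $n=2$ moment equation \eqref{ysystem2}, namely $y_2' = 2(D_1 y_2 + D_2 y_3) + \frac1N R_2$ with $0 \le R_2 \le c$, combined with a bound on $y_3 - y_1 y_2$ (or a direct Grönwall estimate on $y_2 - y_1^2$ using that $y_3 \geq y_1 y_2$ again by a covariance/Chebyshev-type inequality). Once $\operatorname{Var}(X/N) \leq K/N$ on $[0,T]$, Grönwall applied to $g' \leq L g + \tau K/N$ with $g(0)=0$ gives $0 \le g(t) \le \frac{\tau K}{LN}(e^{LT}-1)$, which tends to $0$ uniformly on $[0,T]$.

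\textbf{Main obstacle.} The routine parts are the Jensen inequality and the two Grönwall arguments. The genuine work is the variance bound $y_2 - y_1^2 = O(1/N)$: one must set up a closed differential inequality for $\operatorname{Var}(X/N)$ despite the appearance of the higher moment $y_3$, which requires either a moment inequality bounding $y_3$ in terms of lower moments on $[0,1]$ or an auxiliary estimate for $y_3 - y_1 y_2$. This is where the hypothesis $m=2$ and the sign of $D_2$ (ensuring the relevant quadratic terms push the variance down rather than amplify it) are essential, and it is the step I expect to require the most care.
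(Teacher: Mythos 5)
Your proposal is correct, and its first half coincides with the paper's argument: the inequality $y_1\leq y$ is obtained exactly as in Step~2 of the proof of Theorem~\ref{main}, from Jensen's inequality $y_2\geq y_1^2$, the sign $D_2=-\tau\leq 0$, and the comparison lemma applied to \eqref{ysystem1} and \eqref{meanf}. For the convergence statement, however, you take a genuinely different route. The paper never controls the variance directly; instead it introduces the auxiliary system \eqref{zsystem1}--\eqref{zsystem2}, proves the sandwich $z_1\leq y_1\leq y$ by comparing $y_n$ and $y^n$ with $z_n$ (Jensen's inequality applied to the fractional powers $y_n^{(n+j-1)/n}$, plus a priori bounds $\delta_1(T)\leq z_n\leq\delta_2(T)$ to secure an $N$-independent Lipschitz constant), and then gets $|y-z_1|\leq C_T/N$ from Peano's inequality; convergence of $y_1$ to $y$ follows from the sandwich. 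Your route closes a differential inequality for $v=y_2-y_1^2$: from \eqref{ysystem1}--\eqref{ysystem2} with $D_0=0$ one gets $v'=2D_1v+2D_2(y_3-y_1y_2)+R_2/N$, and since $D_2<0$ and $y_3\geq y_1y_2$ (covariance of the comonotone pair $X/N,(X/N)^2$, or simply $y_3\geq y_2^{3/2}\geq y_1y_2$ by Jensen), this yields $v'\leq 2D_1v+c/N$, $v(0)=0$, hence $v=O(1/N)$ uniformly on $[0,T]$; Gr\"onwall on $g=y-y_1$ then finishes. So the obstacle you flag does resolve cleanly, precisely because of $m=2$ and the sign of $D_2$, as you anticipated. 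The trade-off: your argument is more elementary, avoids the fractional powers and the positivity threshold $\delta_1(T)$ (so it also covers $u=0$, where the paper's Theorem~\ref{main} requires $u>0$), but it only proves convergence, whereas the paper's detour through $z_1,\dots,z_m$ simultaneously produces the explicit computable lower-bound ODE system that is the main contribution of the paper (and of \cite{ArBe}), something your variance estimate does not provide.
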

In the theorem above, the constants $\tau$ and $\gamma$ correspond to the infection and recovery rates in an SIS model of disease spread (see Subsection \ref{sissub}). Our aim is to generalize Theorem \ref{arbethm} to a broader class of coefficients $D_j$ and to provide a lower bound for $y_1$. In the proof of our main result we will not only compare $y$ with $y_1$ but also $y^n$ with $y_n$.  Thus using \eqref{meanf},
\[(y^n)'=ny^{n-1}y'=n\sum_{j=0}^mD_jy^{n+j-1}=n\sum_{j=0}^mD_j(y^n)^{\frac{n+j-1}n}.\]
Hence, the powers of $y$ satisfy the initial value problem below:
\begin{align}\label{psystem1}
y'&=\sum_{j=0}^mD_jy^j,\quad y(0)=y_1(0),\\\label{psystem2}
(y^n)'&=n\sum_{j=0}^mD_j(y^n)^{\frac{n+j-1}n},\quad y^n(0)=y_1^n(0)\quad (n=2,3,\dots).
\end{align}
(The equation for $y'$ is separated from $(y^n)'$ for $n\geq 2$ because it will have a different role.)
This system in combination with system \eqref{ysystem1}--\eqref{ysystem2} for $y_n'$ motivates the following initial value problem:
\begin{align}\label{zsystem1}
z_1'&=\sum_{j=0}^mD_jz_j,\quad z_1(0)=y_1(0),\\\label{zsystem2}
z_n'&=n\sum_{j=0}^mD_jz_n^{\frac{n+j-1}n}+\frac{n(n-1)}{2N}c,\quad z_n(0)=y_1^n(0)\quad (n=2,\dots,m),
\end{align}
where we let $z_0=1$.

\subsection{Main result}
We are now ready to state our main result.
\begin{thm}\label{main}
Assume that
\begin{equation}\label{sign}
D_0\geq0, D_1\in\mathbb{R}\text{ and } D_j\leq0 \text{ for }j\geq 2,
\end{equation}
and let $y_1(0)=u\in(0,1]$ be fixed.
Then for the solutions $y$ of \eqref{meanf} and $z_1$ of \eqref{zsystem1}--\eqref{zsystem2}, it holds that
\begin{align*}
z_1(t)\leq y_1(t)\leq y(t)& \text{ for }t\geq0,\\
z_1(t)^n \leq y_n(t)\leq z_n(t)& \text{ for }t\geq0,\ 2\leq n\leq m,
\end{align*}
and for every $T>0$ there exists a constant $C_T>0$ such that
\[
z_1(t)-y(t)\leq \frac{C_T}N \text{ in }[0,T].
\]
\end{thm}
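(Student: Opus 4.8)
The plan is to prove the three assertions in sequence, using the comparison principle for scalar ODEs (and for cooperative systems) as the main tool, since each of the auxiliary quantities $y$, $z_1$, $z_n$, $y_1$, $y_n$ satisfies an ODE or a differential inequality of a form amenable to such comparisons. First I would establish the upper bound $y_1(t)\le y(t)$. The idea is to compare the exact moment system \eqref{ysystem1}--\eqref{ysystem2} with the power system \eqref{psystem1}--\eqref{psystem2}: both $y_1$ and $y$ satisfy $w' = \sum_{j=0}^m D_j w_j$ with the same initial condition, but in the exact system $w_j = y_j = \E[(X/N)^j]$ while in the power system $w_j = y^j$. By Jensen's inequality, $y_j = \E[(X/N)^j] \ge \E[X/N]^j = y_1^j$ for the convex powers, which combined with the sign condition $D_j\le 0$ for $j\ge 2$ (and $D_0\ge 0$ contributing nothing to the comparison) pushes $y_1'$ below the vector field governing $y$; a Gronwall/comparison argument then yields $y_1\le y$. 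Symmetrically, the lower bound $z_1(t)\le y_1(t)$ should follow by comparing $z_1$ with $y_1$: here one uses $y_n \le z_n$ (the second line of the theorem, proved first for $n\ge 2$) together with the sign condition to show $z_1' \le \sum_j D_j z_j$ with the relevant inequalities oriented so that $z_1$ stays below $y_1$. So the bulk of the work is really the sandwich $z_1^n \le y_n \le z_n$ for $2\le n\le m$, and these feed back into the $z_1 \le y_1$ bound, so the whole thing is a simultaneous induction on $n$.

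For the upper bound $y_n\le z_n$: from Lemma \ref{momlem}, $y_n' = n\sum_{j=0}^m D_j y_{n+j-1} + \frac1N R_n$ with $0\le R_n \le \frac{n(n-1)}{2}c$, whereas $z_n' = n\sum_{j=0}^m D_j z_n^{(n+j-1)/n} + \frac{n(n-1)}{2N}c$. The strategy is to show that, given $y_k\le z_k$ for $k<n$ (induction) and the convexity relations $y_{n+j-1}\ge$ (something controllable in terms of $y_n$ or lower moments), the vector field for $y_n$ lies below that for $z_n$, so comparison gives $y_n\le z_n$. The delicate point is that $y_{n+j-1}$ for $j\ge 2$ has index exceeding $n$, so it is not directly covered by the induction hypothesis; here one must invoke a moment inequality (again Jensen / power-mean: $y_{n+j-1}\ge y_n^{(n+j-1)/n}$ since $(n+j-1)/n\ge 1$) to replace the higher moment by a power of $y_n$, and the sign $D_j\le 0$ makes this replacement decrease the vector field, which is exactly the direction needed. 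The lower bound $z_1^n\le y_n$ is the easy half: it is immediate from $z_1\le y_1$ and Jensen, $y_n = \E[(X/N)^n]\ge (\E[X/N])^n = y_1^n \ge z_1^n$.

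Finally, for the quantitative gap $z_1(t)-y(t)\le C_T/N$ on $[0,T]$: both $z_1$ and $y$ solve $w' = \sum_{j=0}^m D_j w_j$ with identical initial data; the only difference is that $y$ uses $y_j = y^j$ (honest powers) while $z_1$ uses $z_j$, and by the parts already proved $z_1^j \le z_j$ with $z_j$ itself controlled by an ODE whose inhomogeneous term is $O(1/N)$. So I would form $e(t) = y(t) - z_1(t) \ge 0$, differentiate, and estimate $e' = \sum_{j=0}^m D_j(y^j - z_j)$; writing $y^j - z_j = (y^j - z_1^j) + (z_1^j - z_j)$, the first group is Lipschitz-controlled by $e$ (on the bounded region $[0,1]$ where the solutions live, using $|y^j - z_1^j|\le j\,e$), and the second group is $O(1/N)$ because each $z_j$ differs from $z_1^j$ by the accumulated effect of the $\frac{j(j-1)}{2N}c$ forcing term in \eqref{zsystem2}, which a Gronwall estimate bounds by $\frac{C_T'}{N}$ on $[0,T]$. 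Then $e' \le L\,e + \frac{C_T''}{N}$ with $e(0)=0$ yields $e(t)\le \frac{C_T}{N}$ by Gronwall. The main obstacle I anticipate is bookkeeping the interlocking inductions — the $z_j$'s, $y_j$'s, and $z_1$ must be bounded in the right order and all confined to a region (like $[0,1]$ or a slightly larger interval) where the polynomial vector fields are Lipschitz and where the fractional powers $z_n^{(n+j-1)/n}$ are well-defined and monotone; establishing a priori that $0\le z_n, y_n, z_1, y\le$ const uniformly in $N$ on $[0,T]$ is the technical crux that makes all the comparison arguments legitimate.
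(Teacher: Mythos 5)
Your qualitative bounds are set up essentially as in the paper: Jensen's inequality plus the sign condition \eqref{sign} turn the moment equations into differential inequalities, and the comparison lemma gives $y_1\le y$, $y_n\le z_n$, and then $z_1\le y_1$ (with $z_1^n\le y_n$ following from Jensen). However, there is a genuine gap in your final quantitative step. You decompose $y^j-z_j=(y^j-z_1^j)+(z_1^j-z_j)$ and assert that $|z_1^j-z_j|=O(1/N)$ ``because $z_j$ differs from $z_1^j$ by the accumulated effect of the $\frac{j(j-1)}{2N}c$ forcing term.'' That is not true as stated: $z_1^j$ and $z_j$ do not satisfy the same ODE up to the forcing, because $(z_1^j)'=jz_1^{j-1}\bigl(D_0+D_1z_1+\sum_{i\ge2}D_iz_i\bigr)$ involves the $z_i$'s themselves, not $z_1^i$; so the discrepancy between $z_j$ and $z_1^j$ is driven precisely by the quantities $z_i-z_1^i$ you are trying to bound, and your Gronwall step is circular (it could be repaired as a coupled Gronwall system over $j=2,\dots,m$, but that is not what you wrote). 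The clean route — and the one the paper takes — is to introduce the powers $y^n$ of the mean-field solution as the intermediary: $y^n$ satisfies the \emph{same} autonomous equation $(y^n)'=g_n(y^n)$ as $z_n'=g_n(z_n)$ except for the explicit $\frac{n(n-1)}{2N}c$ term, so Peano's inequality gives $|y^n-z_n|\le C(n,T)/N$ directly; feeding this into the comparison of $y'=D_0+D_1y+\sum_{j\ge2}D_jy^j$ with $z_1'=D_0+D_1z_1+\sum_{j\ge2}D_jz_j$ (Lipschitz constant $|D_1|$ in the state variable) yields $|y-z_1|\le C_T/N$ with no need for your decomposition at all.

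A secondary point: you correctly identify a priori confinement of the solutions as the technical crux, but boundedness above is not the issue — the exponents $(n+j-1)/n\ge1$ are Lipschitz on bounded sets. What is actually needed is a lower bound \emph{strictly away from zero}, uniform in $N$ on $[0,T]$, because the $j=0$ term contributes the concave power $D_0x^{(n-1)/n}$ whose derivative blows up at $0$; without $y_n,z_n\ge\delta_1(T)>0$ (obtained in the paper from $y_1\ge ue^{tD}$, Jensen, and $u>0$) neither the uniqueness hypothesis in the comparison lemma for $z_n'=g_n(z_n)$ nor an $N$-independent Lipschitz constant for the Peano/Gronwall step is available. This is exactly where the hypothesis $u\in(0,1]$ enters, and your sketch does not use it.
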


The proof is based on some familiar inequalities which we recall in the next subsection.

\subsection{Tools of the proof}

The following comparison results are standard in the theory of ODEs, see \cite{Hale}.

\begin{lemma}[Comparison]\label{complem}
Suppose that $f(t,x)$ is continuous in $x$;
\begin{itemize}
\item the initial value problem $x_2'(t)=f(t,x_2(t))$, $x_2(0)=x_0$ has a unique solution for $t\in[0,T]$;
\item $x_1'(t)\leq f(t,x_1(t))$ for $t\in[0,T]$; and $x_1(0)\leq x_0$.
\end{itemize}
Then $x_1(t)\leq x_2(t)$ for $t\in[0,T]$.
\end{lemma}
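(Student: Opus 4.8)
The plan is to establish the \emph{strict} form of the inequality first and then recover the stated (non-strict) conclusion by a perturbation-and-limit argument. This is the natural route here because $f$ is assumed only continuous in $x$, not Lipschitz, so Gronwall-type estimates on $x_2-x_1$ are unavailable; the uniqueness hypothesis on the unperturbed problem is what will make the limiting step legitimate.

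First I would fix $\epsilon>0$ and let $x_2^\epsilon$ denote a solution of the perturbed problem $(x_2^\epsilon)'(t)=f(t,x_2^\epsilon(t))+\epsilon$ with $x_2^\epsilon(0)=x_0+\epsilon$. Relative to this perturbed equation, $x_1$ is a \emph{strict} subsolution: the hypotheses give $x_1'(t)\le f(t,x_1(t))<f(t,x_1(t))+\epsilon$ together with $x_1(0)\le x_0<x_0+\epsilon$. So the task reduces to proving the strict inequality $x_1(t)<x_2^\epsilon(t)$ on $[0,T]$.

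For the strict inequality I would argue by contradiction using the first crossing time. Suppose $x_1(t)<x_2^\epsilon(t)$ fails somewhere on $[0,T]$; then the set $\{t\in[0,T]:x_1(t)\ge x_2^\epsilon(t)\}$ is nonempty and closed (as the preimage of $[0,\infty)$ under the continuous map $x_1-x_2^\epsilon$), so it has an infimum $t^\ast$ belonging to it. Since $x_1(0)<x_2^\epsilon(0)$ and $x_1-x_2^\epsilon$ is continuous, $t^\ast>0$, with $x_1(t^\ast)=x_2^\epsilon(t^\ast)$ and $x_1(t)<x_2^\epsilon(t)$ for all $t<t^\ast$. Evaluating derivatives at $t^\ast$ and using the equality of values, $x_1'(t^\ast)\le f(t^\ast,x_1(t^\ast))=f(t^\ast,x_2^\epsilon(t^\ast))<f(t^\ast,x_2^\epsilon(t^\ast))+\epsilon=(x_2^\epsilon)'(t^\ast)$, so $(x_2^\epsilon-x_1)'(t^\ast)>0$. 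But $(x_2^\epsilon-x_1)(t^\ast)=0$ together with a strictly positive derivative forces $x_2^\epsilon-x_1<0$ immediately to the left of $t^\ast$, contradicting $x_1<x_2^\epsilon$ on $[0,t^\ast)$. Hence no crossing occurs and $x_1(t)<x_2^\epsilon(t)$ on all of $[0,T]$.

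Finally I would let $\epsilon\to0$. Because the unperturbed problem $x_2'=f(t,x_2)$, $x_2(0)=x_0$ is assumed to have a unique solution on $[0,T]$, the standard continuous-dependence theorem (see \cite{Hale}) guarantees that the perturbed solutions $x_2^\epsilon$ exist on $[0,T]$ for small $\epsilon$ and converge, uniformly on $[0,T]$, to $x_2$ as $\epsilon\to0$. Passing to the limit in $x_1(t)<x_2^\epsilon(t)$ then yields $x_1(t)\le x_2(t)$ for every $t\in[0,T]$, which is the claim. I expect the main obstacle to be precisely this limiting step: one must know that the $x_2^\epsilon$ do not blow up before $T$ and do not bifurcate away from $x_2$, and this is exactly where the uniqueness of the unperturbed solution is indispensable, since continuity of $f$ alone would be too weak to control the perturbation.
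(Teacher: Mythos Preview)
Your argument is the classical strict-inequality-plus-perturbation proof of the comparison principle and is correct as written; the first-crossing contradiction is clean, and you correctly identify that the uniqueness hypothesis on the unperturbed problem is exactly what powers the limiting step via continuous dependence. The paper itself does not prove this lemma at all: it is stated as a standard tool and referred to \cite{Hale}, so there is no in-paper argument to compare against. Your route is in fact the one typically given in references such as Hale or Hartman, so you have supplied precisely the proof the paper omits.
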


\begin{lemma}[Peano's inequality]
Suppose that $f_1,f_2\colon[0,T]\times[a,b]\to\mathbb{R}$ are Lipschitz continuous functions in their second variable with Lipschitz constant $L$ and $|f_1(t,x)-f_2(t,x)|\leq M$ in $[0,T]\times[a,b]$ with some constant $M$. If
\begin{itemize}
\item $x_1'(t)=f_1(t,x_1(t)), x_2'(t)=f_2(t,x_2(t))$ for $t\in(0,T]$ and
\item $x_1(0)=x_2(0)$,
\end{itemize}
then
\[|x_1(t)-x_2(t)|\leq \frac{M}L\left(e^{Lt}-1\right) \quad (t\in[0,T]).\]
\end{lemma}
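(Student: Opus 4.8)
The plan is to reduce the claim to a scalar integral inequality and then close it with a Grönwall-type argument. First I would write the difference of the two solutions in integral form using the fundamental theorem of calculus: since $x_1(0)=x_2(0)$,
\[
x_1(t)-x_2(t)=\int_0^t\bigl(f_1(s,x_1(s))-f_2(s,x_2(s))\bigr)\,ds .
\]
Taking absolute values and setting $w(t):=|x_1(t)-x_2(t)|$ gives $w(t)\leq\int_0^t|f_1(s,x_1(s))-f_2(s,x_2(s))|\,ds$.

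Next I would split the integrand by inserting the intermediate term $f_1(s,x_2(s))$ and applying the triangle inequality:
\[
|f_1(s,x_1(s))-f_2(s,x_2(s))|\leq|f_1(s,x_1(s))-f_1(s,x_2(s))|+|f_1(s,x_2(s))-f_2(s,x_2(s))|.
\]
The first term is at most $L\,w(s)$ by the Lipschitz hypothesis on $f_1$ in its second variable, and the second is at most $M$ by assumption. This yields the key inequality
\[
w(t)\leq Mt+L\int_0^t w(s)\,ds,\qquad t\in[0,T].
\]

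To extract the explicit bound I would introduce the majorant $g(t):=Mt+L\int_0^t w(s)\,ds$, so that $w\leq g$, $g(0)=0$, and $g'(t)=M+Lw(t)\leq M+Lg(t)$. Multiplying by the integrating factor $e^{-Lt}$ gives $\bigl(e^{-Lt}g(t)\bigr)'\leq Me^{-Lt}$; integrating from $0$ to $t$ and using $g(0)=0$ produces $e^{-Lt}g(t)\leq \tfrac{M}{L}(1-e^{-Lt})$, and hence $w(t)\leq g(t)\leq\tfrac{M}{L}\bigl(e^{Lt}-1\bigr)$, which is exactly the asserted estimate.

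The only delicate point is that $w(t)=|x_1(t)-x_2(t)|$ fails to be differentiable where the two solutions cross, so one cannot naively differentiate the absolute value. Working throughout with the integral formulation sidesteps this entirely; alternatively one could note that $w$ is Lipschitz, hence absolutely continuous and differentiable almost everywhere with $w'(t)\leq|x_1'(t)-x_2'(t)|$, but the integral route is cleaner and requires no regularity bookkeeping. I do not expect a genuine obstacle here: once the integral inequality is in place, the estimate follows from a single integrating-factor (Grönwall) step, and the particular closed form $\tfrac{M}{L}(e^{Lt}-1)$ is just the explicit solution of the limiting equation $g'=M+Lg$, $g(0)=0$.
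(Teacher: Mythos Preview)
Your argument is correct and is the standard route to this estimate: write the difference in integral form, split the integrand via the intermediate point $f_1(s,x_2(s))$, apply the Lipschitz and uniform bounds, and close with a Gr\"onwall/integrating-factor step. The paper itself does not prove this lemma; it merely records it among the standard ODE tools (alongside the comparison lemma and Jensen's inequality) with a reference to Hale, so there is nothing to compare against beyond noting that your proof is the expected one.
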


The classical Jensen's inequality and the definition of the expected value yields the probabilistic version of Jensen's inequality, see \cite{Ross}.

\begin{lemma}[Jensen's inequality]
If $X$ is a random variable and $\varphi\colon\mathbb{R}\to\mathbb{R}$ is a convex function, then
\[\varphi(\mathbb{E}[X])\leq\mathbb{E}[\varphi(X)].\]
For concave $\varphi$, the reverse inequality holds.
\end{lemma}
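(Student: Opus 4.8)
The plan is to prove the convex case by the supporting-line (subgradient) method and then deduce the concave case by applying the convex case to $-\varphi$. Write $\mu=\E[X]$ and assume $\mu$ is finite, which is already implicit in the statement since $\varphi(\E[X])$ must be well defined. The single idea is that a convex function lies above each of its supporting lines: it therefore suffices to produce one supporting line at the point $\mu$, bound $\varphi$ below by it, and integrate against the distribution of $X$.

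First I would establish the existence of a supporting line at $\mu$. Because $\varphi$ is convex on $\mathbb{R}$, the difference quotient $\bigl(\varphi(v)-\varphi(u)\bigr)/(v-u)$ is nondecreasing in each of its arguments; consequently the one-sided derivatives
\[
\varphi'_-(\mu)=\lim_{h\uparrow 0}\frac{\varphi(\mu+h)-\varphi(\mu)}{h},\qquad
\varphi'_+(\mu)=\lim_{h\downarrow 0}\frac{\varphi(\mu+h)-\varphi(\mu)}{h}
\]
both exist (as finite limits) and satisfy $\varphi'_-(\mu)\le\varphi'_+(\mu)$. Fixing any slope $a$ with $\varphi'_-(\mu)\le a\le\varphi'_+(\mu)$, the monotonicity of the difference quotients gives, for every $x\in\mathbb{R}$,
\[
\varphi(x)\ge \varphi(\mu)+a\,(x-\mu),
\]
which is exactly the desired supporting line at $\mu$.

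With this pointwise lower bound in hand, I would substitute the random variable $X$ for $x$ and take expectations. Using linearity of $\E$ together with $\E[X]=\mu$,
\[
\E[\varphi(X)]\ge \varphi(\mu)+a\,\bigl(\E[X]-\mu\bigr)=\varphi(\mu)=\varphi(\E[X]),
\]
which is the asserted inequality; if $\E[\varphi(X)]=+\infty$ the bound holds trivially, so no integrability hypothesis beyond finiteness of $\mu$ is required. For concave $\varphi$, applying the inequality just proved to the convex function $-\varphi$ reverses it, giving $\varphi(\E[X])\ge\E[\varphi(X)]$. The only nontrivial step is the existence of the supporting line, i.e.\ that convexity forces the one-sided derivatives to exist and to bracket a common slope $a$; everything afterward is linearity of expectation, so I expect the supporting-line construction (with the minor bookkeeping for the case $\E[\varphi(X)]=+\infty$) to be the main point.
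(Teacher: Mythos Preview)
Your proof is correct: the supporting-line (subgradient) argument is the standard self-contained route to the probabilistic Jensen inequality, and your treatment of the concave case and of the possibility $\E[\varphi(X)]=+\infty$ is fine.

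The paper, however, does not actually prove this lemma. It merely remarks that the probabilistic version follows from the classical Jensen inequality together with the definition of the expectation, and refers the reader to a textbook. So your write-up is substantially more detailed than what the paper offers; there is nothing to compare at the level of method, since the paper treats the result as background and gives no argument of its own.
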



\section{Proof of the main result}

\begin{proof}[Proof of Theorem \ref{main}]
The proof will be carried out in multiple steps. First, we derive bounds for $y_n$ and $z_n$ independent of $N$ which will guarantee uniform Lipschitz constants with respect to $N$ in steps three and four. Second, we show that $y_1\leq y$. In the third step, we show that $y_n \leq z_n$ for $n\geq 2$ and use Peano's inequality to deduce from equations \eqref{psystem2}, \eqref{ysystem2} and \eqref{zsystem2} that there is some constant $C(n,T)$ depending on $n$ and $T$ such that
\[|y^n(t)-z_n(t)|\leq \frac{C(n,T)}N \text{ for } t\in[0,T],\ n=2,\ldots,m.\]
Finally with these estimates at hand, we obtain in the same manner $z_1\leq y_1$ and
\[|y(t)-z_1(t)|\leq \frac{C_T}N \text{ for } t\in[0,T],\]
with some suitable constant $C_T$.

\textbf{Step 1: A priori bounds independent of $N$.}
We first focus on a lower bound for $y_n$.
The definition of $y_n$ implies $0\leq y_\ell\leq y_n$ for all $n\leq \ell$. Therefore,
\[y_1'\geq D_0+y_1\sum_{j=1}^mD_j\geq y_1 D,\ y_1(0)=u,\]
where the second inequality is due to \eqref{sign} and we define $D=\sum_{j=1}^mD_j$.  Applying Lemma \ref{complem} we obtain
\[y_1(t)\geq ue^{tD}.\]
Now due to Jensen's inequality,
\begin{equation}\label{Jensen}
y_1^n=\E[X/N]^n\leq \E[(X/N)^n]=y_n\text{ for } n\geq 1.
\end{equation}
Defining $\delta_1(T)=u^m\min\{1,e^{TmD}\}>0$, this leads to our lower bound,
\begin{equation}\label{also}
y_n(t) \geq y_1^n(t) \geq u^n e^{tnD}\geq \delta_1(T)\text{ for } 1\leq n\leq m,t\in[0,T].
\end{equation}

Next we focus on an upper bound for $z_n$ for $n\geq 2$.  We bound the definition \eqref{zsystem2} by assuming $z_n\geq 1$. Then using the sign condition \eqref{sign}, $z_n^{\frac{n-1}{n}}\leq z_n$, and $N\geq 1$,
\[z'_n \leq n(D_0+|D_1|+c n)z_n,\quad z_n(0)=u^n \text{ for } n\geq 2.\]
Applying Lemma \ref{complem} to this differential inequality, we obtain an upper bound,
\begin{equation}\label{zbound}
z_n(t) \leq u^n e^{n(D_0+|D_1|+cn)t}\leq \delta_2(T)\text{ for } 2\leq n\leq m,t\in[0,T],
\end{equation}
where $\delta_2(T)=\max\{1,u^2 e^{m(D_0+|D_1|+cm)T}\}$.

\textbf{Step 2: Comparison of $y$ and $y_1$.}
Substituting \eqref{Jensen} into \eqref{ysystem1} with regard to the sign condition \eqref{sign} yields
\[y_1'\leq \sum_{j=0}^m D_jy_1^j.\]
Applying Lemma \ref{complem} to the above inequality and to \eqref{meanf} yields for $t\in[0,T]$,
\begin{equation}\label{y1lessy}
y_1(t)\leq y(t).
\end{equation}

\textbf{Step 3: Comparison of $y_n$, $y^n$, and $z_n$.}
Applying Jensen's inequality again,
\begin{gather}\label{jen1}
y_{n+j-1}=\E[(X/N)^{n+j-1}]\geq \E[(X/N)^n]^{\frac{n+j-1}n}=y_n^{\frac{n+j-1}n}\text{ for } j\geq 1,\\
y_{n-1}=\E[(X/N)^{n-1}]\leq \E[(X/N)^n]^{\frac{n-1}n}=y_n^{\frac{n-1}{n}}.\label{jen2}
\end{gather}
We also have the trivial estimate
\begin{equation}\label{triv}
0\leq y_n\leq1.
\end{equation}

Putting the estimates \eqref{jen1}, \eqref{jen2} and \eqref{triv} into the differential equation \eqref{ysystem2} of the $n$-th moment for $n\geq 2$ and taking care of the sign condition \eqref{sign} we obtain
\begin{equation}\label{ynest}
y_n'\leq n\sum_{j=0}^mD_jy_n^{\frac{n+j-1}n}+\frac{n(n-1)}{2N}c,\quad y_n(0)=y_1^n(0)=u^n.
\end{equation}
By introducing the function $g_n(x)=n\sum_{j=0}^mD_jx^{\frac{n+j-1}n}+\frac{n(n-1)}{2N}c$, \eqref{ynest} can be written as $y_n'\leq g_n(y_n)$, and \eqref{zsystem2} has the form $z_n'=g_n(z_n)$.
Now, $g_n(x)$ is Lipschitz continuous except at 0. Thus we can apply Lemma \ref{complem} while $z_n\geq\delta_1(T)$ holds, to obtain
\begin{equation}\label{ysubnzn}
y_n(t)\leq z_n(t) \text{ for } 2\leq n\leq m,t\in [0,T].
\end{equation}
Fortunately, \eqref{also} ensures $z_n(t)\geq y_n(t)\geq \delta_1(T)$ for $t\leq T$ and $2\leq n\leq m$.

Using the same steps and \eqref{psystem2} we can also show that
\begin{equation}\label{ynzn}
y^n(t)\leq z_n(t) \text{ for } 2\leq n\leq m,t\in [0,T].
\end{equation}
Now,
\begin{equation}\label{zybound}
\delta_2(T)\geq z_n(t)\geq y^n(t) \geq y_1^n(t) \geq \delta_1(T)
\text{ for } 2\leq n\leq m,t\in [0,T],
\end{equation}
where the inequalities are from \eqref{zbound}, \eqref{ynzn}, \eqref{y1lessy}, and \eqref{also}, respectively.

This lets us use Peano's inequality to estimate the difference of the solution $y^n$ of \eqref{psystem2} and $z_n$ of \eqref{zsystem2} since $z_n'=g_n(z_n)$, $(y^n)'=g_n(y^n)-\frac{n(n-1)}{2N}c$, and \eqref{zybound} ensures a Lipschitz constant independent of $N$. Therefore, there is some constant $C(n,T)$ depending on $n$ and $T$ such that
\begin{equation}\label{est}
|y^n(t)-z_n(t)|\leq \frac{C(n,T)}N \text{ for } 2\leq n\leq m,t\in[0,T].
\end{equation}

\textbf{Step 4: Comparison of $y_1$ and $z_1$.}
Now, with estimates \eqref{ysubnzn} and \eqref{est} in our hand we turn to equations \eqref{psystem1}, \eqref{ysystem1}, \eqref{zsystem1} to obtain estimates on their solutions analogously to the preceding part of the proof. First, by substitution of estimates \eqref{ysubnzn} into \eqref{ysystem1} with regard to the sign condition \eqref{sign} it follows that
\begin{equation}\label{y1est}
y_1'\geq D_0+D_1y_1+\sum_{j=2}^mD_jz_j,\quad y(0)=y_1(0)=u.
\end{equation}
Considering functions $z_n$ ($n=2,\dots,m$) fixed, then \eqref{y1est} and \eqref{zsystem1} have the form $y_1'(t)\geq g_1(t,y_1(t))$ and $z_1'(t)=g_1(t,z_1(t))$ where $g_1(t,x)=D_0+D_1x+\sum_{j=2}^mD_jz_j(t)$. Thus, Lemma \ref{complem} implies
\[y_1\geq z_1\]
and this leads to the lower bound for the $n$-th moment as $y_n\geq z_1^n$ by using \eqref{also}.

Now we apply Peano's inequality to estimate the difference of the solution $y$ of \eqref{psystem1} and $z_1$ of \eqref{zsystem1}. Indeed, $y'(t)=h_1(t,y(t))$ and $z_1'(t)=g_1(t,z_1(t))$ where $h_1(t,x)=D_0+D_1x+\sum_{j=2}^mD_jy^j(t)$. Therefore,
\[h_1(t,x)-g_1(t,x)=\sum_{j=2}^m D_j(y^j(t)-z_j(t)).\]
Thus by \eqref{est},
\[| h_1(t,x)-g_1(t,x)|\leq \frac{m \cdot \max_{2\leq n\leq m} |D_n|C(n,T)}N \text{ for } t\in[0,T].\]
Then Peano's inequality implies that there is some constant $C$ depending on $m$ and $T$ such that
\[
|y(t)-z_1(t)|\leq \frac{C}N \text{ for } t\in[0,T].
\]
The proof is now complete.
\end{proof}


\section{Examples}

Here we show numerically the lower and upper bounds in the case of two network processes.

\subsection{SIS epidemic propagation with and without airborne infection}\label{sissub}

Consider SIS epidemic propagation on a regular random graph with $N$ nodes.  We allow for multiple routes of infection so that the infection spreads not only via the contact network, but also due to external forcing, such as airborne infection. The state space of the corresponding one-step process is $\{ 0,1, \ldots , N\}$, where $k$ denotes the state with $k$ infected nodes. In fact, the position of the infected nodes also affects the spreading process, hence the state space is larger, and therefore our model is only an approximation of the real infection propagation process. The validity of this approximation is discussed in detail in \cite{akckcikk}. We note that in the case of a complete graph, the model is exact. Starting from state
$k$ the system can move either to state $k+1$ or to $k-1$, since at a given instant only one node can change its state. When the system moves from state $k$ to $k+1$ then a susceptible node becomes infected. The rate of external infection of a susceptible node is denoted by $\beta$. The rate of internal infection is proportional to the number of infected neighbours, which is $d\frac{k}{N}$ in average, where $d$ denotes the degree of each node in the network and $\frac{k}{N}$ is the proportion of the infected nodes. Since there are $N-k$ susceptible nodes and each of them has $d\frac{k}{N}$ infected neighbours the total number of $SI$ edges is $d(N-k)\frac{k}{N}$. Then the rate of transition from state $k$ to state $k+1$ is obtained by adding the rates of the two infection processes
$$
a_k=\tau d(N-k)\frac{k}{N} +\beta (N-k) .
$$
where $\tau$ is the infection rate. The rate of transition from state $k$ to $k-1$ is $c_k=\gamma k$, because any of the $k$ infected nodes can recover with recovery rate $\gamma$. Using these coefficients, $a_k$ and $c_k$, the spreading process can be described by equation \eqref{eqKolm}. The coefficients can be given in the form \eqref{akck} by choosing the functions $A(x)=\tau d x (1-x)+\beta (1-x) $ and $C(x)=\gamma x$. The coefficients of these polynomials are $A_0=\beta $, $A_1=\tau d -\beta$, $A_2=-\tau d $, $C_0=0$, $C_1=\gamma$ and $C_2=0$. Thus the coefficients
$$
D_0=\beta, \quad D_1=\tau d -\beta - \gamma, \quad D_2= -\tau d
$$
satisfy the sign condition \eqref{sign}. According to \eqref{meanf}, the mean-field equation takes the form
\begin{equation}
y'=\beta+(\tau d -\beta - \gamma)y-\tau d y^2 \label{meanfSISa}
\end{equation}
subject to the initial condition $y(0)=i/N$, where $i$ is the number of initially infected nodes.
System \eqref{zsystem1}-\eqref{zsystem2} can be written as
\begin{align}\label{zsystem1SISa}
z_1'&=\beta+(\tau d -\beta - \gamma)z_1-\tau d z_2,\\\label{zsystem2SISa}
z_2'&=2\beta z_2^{1/2}+2(\tau d -\beta - \gamma)z_2 -2\tau d z_2^{3/2} + \frac{c}{N},
\end{align}
where $c=\beta+|\tau d-\beta|+\tau d +\gamma$. The initial condition is $z_1=i/N$, $z_2=(i/N)^2$. The mean-field equation \eqref{meanfSISa} and the system \eqref{zsystem1SISa}--\eqref{zsystem2SISa} can be easily solved with an ODE solver.

The solutions without airborne infection, $\beta=0$, are shown in Figure \ref{fig:SIS} for $N=10^6$ and $N=10^7$. It is important to note that for such large values of $N$ the master equation \eqref{eqKolm} cannot be solved numerically, but we know that the expected value $y_1(t)= \sum_{k=0}^N  \frac{k}{N}  p_k(t)$ is between the two curves given in the Figure. We can also see that for small times the two bounds are nearly identical, i.e., we get the expected value with high accuracy. As time increases the bounds move apart, moreover the length of time interval, where the two bounds give the expected value accurately increases with $N$.

The solutions with airborne infection, $\beta>0$, are shown in Figure \ref{fig:SISair} for $N=100$ together with the expected value $y_1(t)= \sum_{k=0}^N  \frac{k}{N} p_k(t)$ obtained by solving the master equation \eqref{eqKolm} for $p_k$. One can see that the expected value is between the two bounds, in fact, it is hardly to distinguish from the solution of the mean-field equation, therefore the stationary part of the curves are enlarged in the inset. Note that the performance of the bounds is much better than in the case without airborne infection. Here we get much closer bounds even for a small value of $N$.

\subsection{A voter-like model}

Consider again a regular random network where each node can be in one of two states, 0 or 1, representing two opinions propagating along the edges of the network (see \cite{HolleyLiggett}). If a node is in state 0 and has $j$ neighbours in state 1, then its state will change to 1 with probability $j\tau \Delta t$ in a small time interval $\Delta t$. This describes a node switching to opinion 1. The opposite case can also happen, that is a node in state 1 can become a node with opinion 0 with a probability $j\gamma \Delta t$ in a small time interval $\Delta t$, if it has $j$ neighbours in state 0. The parameters $\tau$ and $\gamma$ characterize the strengths of the two opinions. Voter models are related to the famous Ising spin model in physics where the atomic spin, $\pm1$, in a domain is affected by the spin in neighboring domains. The state space of the corresponding one-step process is $\{ 0,1, \ldots , N\}$, where $k$ denotes the state, in which there are $k$ nodes with opinion 1. Starting from state $k$ the system can move either to state $k+1$ or to $k-1$, since at a given instant only one node can change its opinion. When the system moves from state $k$ to $k+1$ then a node with opinion 0 is ``invaded'' and becomes a node with opinion 1. The rate of this transition is proportional to the number of neighbours with opinion 1, which is $d\frac{k}{N}$ in average, where $d$ is the degree of each node in the network and $\frac{k}{N}$ is the proportion of the nodes with opinion 1. Since there are $N-k$ nodes with opinion 0 and each of them has $d\frac{k}{N}$ neighbours with the opposite opinion the total number of edges connecting nodes with two different opinions is $d(N-k)\frac{k}{N}$. Hence the rate of transition from state $k$ to state $k+1$ is
$$
a_k=\tau d(N-k)\frac{k}{N}.
$$
Similar reasoning leads to the rate of transition from state $k$ to $k-1$ as
$$
c_k=\gamma dk\frac{N-k}{N}.
$$
Using these coefficients, $a_k$ and $c_k$, the spreading process can be described by equation \eqref{eqKolm}. The coefficients can be given in the form \eqref{akck} by choosing the functions $A(x)=\tau d x (1-x)$ and $C(x)=\gamma d x(1-x)$. The coefficients of these polynomials are $A_0=0$, $A_1=\tau d $, $A_2=-\tau d $, $C_0=0$, $C_1=\gamma d$ and $C_2=-\gamma d$. Thus the coefficients
$$
D_0=0, \quad D_1=\tau d - \gamma d, \quad D_2= \gamma d-\tau d
$$
satisfy the sign condition \eqref{sign} if $\gamma < \tau$. According to \eqref{meanf} the mean-field equation takes the form
\begin{equation}
y'=(\tau d - \gamma d)(y- y^2) \label{meanfvot}
\end{equation}
subject to the initial condition $y(0)=i/N$, where $i$ is the number of nodes with opinion 1 at time 0.
System \eqref{zsystem1}-\eqref{zsystem2} can be written as
\begin{align}\label{zsystem1vot}
z_1'&=(\tau d - \gamma d)z_1-(\tau d - \gamma d) z_2,\\\label{zsystem2vot}
z_2'&=2(\tau d - \gamma d)z_2 -2(\tau d - \gamma d) z_2^{3/2} + \frac{c}{N},
\end{align}
where $c=2\tau d +2\gamma d $. The initial condition is $z_1= i/N$, $z_2= (i/N)^2$. The mean-field equation \eqref{meanfvot} and system \eqref{zsystem1vot}--\eqref{zsystem2vot} can be easily solved with an ODE solver. The solutions are shown in Figure \ref{fig:vot} both for $D_2<0$ (left panel) and for $D_2>0$ (right panel). For $D_2<0$ the lower bound performs well only for large values of $N$. The master equation \eqref{eqKolm} cannot be solved numerically  for such large values of $N$. Hence the expected value $y_1$ is not shown in the left panel. For $D_2>0$, i.e., when $\gamma > \tau$, the role of $y$ and $z_1$ is exchanged. The solution $y$ of the mean-field equation becomes the lower bound and $z_1$ becomes the upper bound. The function $y$ is hardly distinguishable from the expected value $y_1$, the inset shows that $y$ is really a lower bound.

\begin{figure}[h!]
\begin{center}
\includegraphics[scale=0.4]{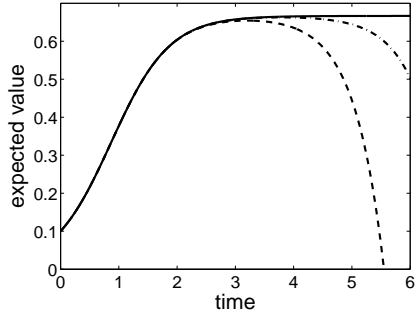}
\caption{SIS epidemic without airborne infection. The solution $y$ of the mean-field equation \eqref{meanfSISa} (continuous curve) and the first coordinate $z_1$ of the solution of system \eqref{zsystem1SISa}--\eqref{zsystem2SISa} for $N=10^6$ (dashed curve) and for $N=10^7$ (dashed-dotted curve). The parameter values are $\gamma=1$, $\tau=0.1$, $d=30$, and $\beta=0$. }\label{fig:SIS}
\end{center}
\end{figure}

\begin{figure}[h!]
\begin{center}
\includegraphics[scale=0.4]{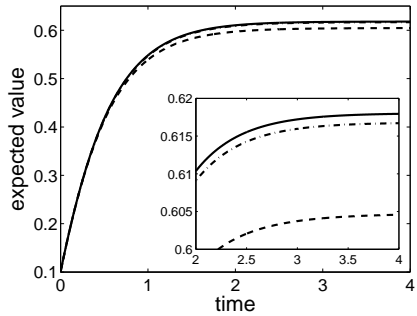}
\caption{SIS epidemic with airborne infection. The solution $y$ of the mean-field equation \eqref{meanfSISa} (continuous curve), the first coordinate $z_1$ of the solution of system \eqref{zsystem1SISa}--\eqref{zsystem2SISa} (dashed curve) and the expected value $y_1$ obtained from the solution of the master equation (dashed-dotted curve). The stationary part of the curves are enlarged in the inset. The parameter values are $N=100$, $\gamma=1$, $\tau=0.05$, $d=20$ and $\beta=1$. }\label{fig:SISair}
\end{center}
\end{figure}

\begin{figure}[h!]
\begin{center}
\includegraphics[scale=0.4]{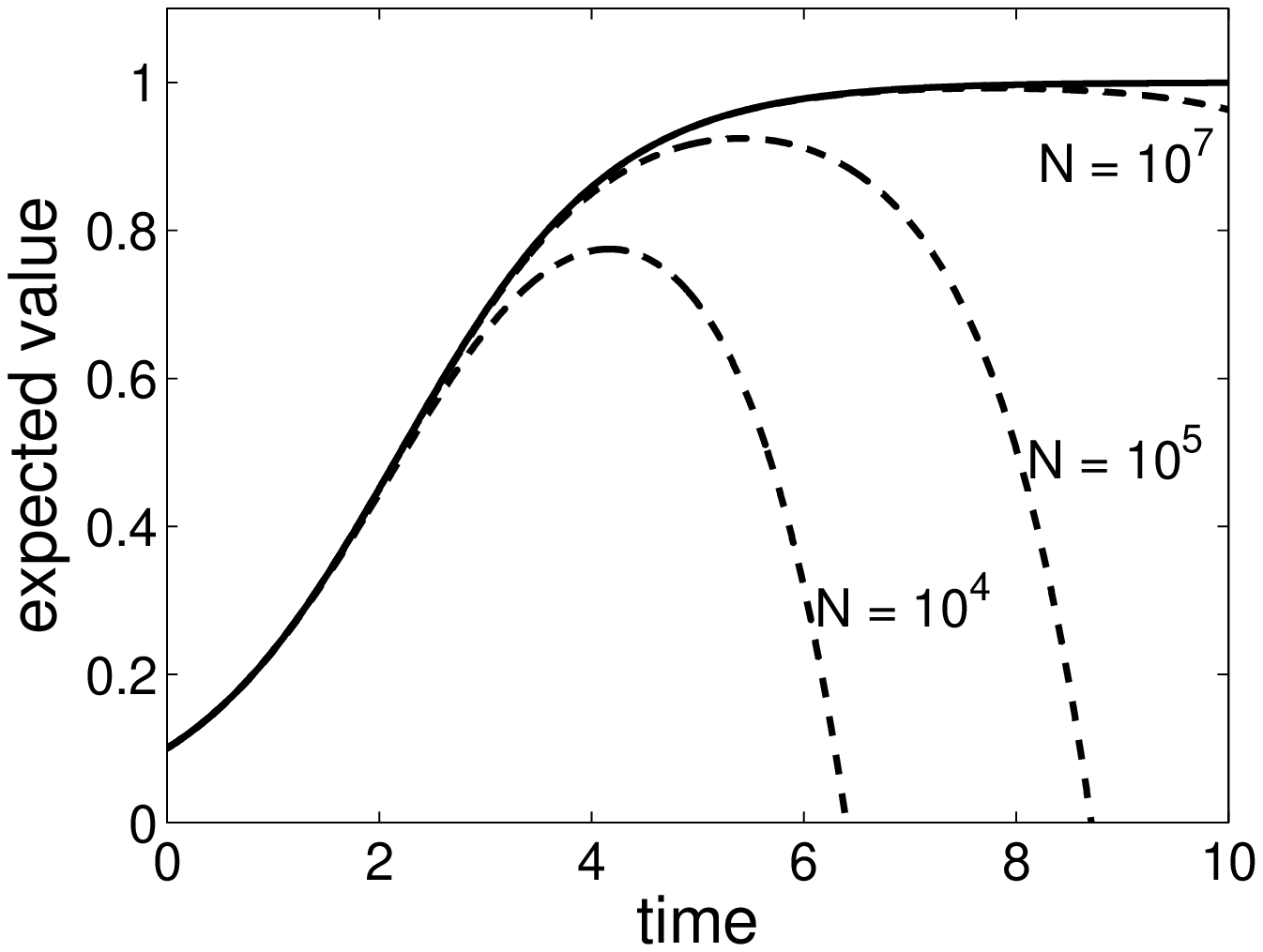}
\includegraphics[scale=0.4]{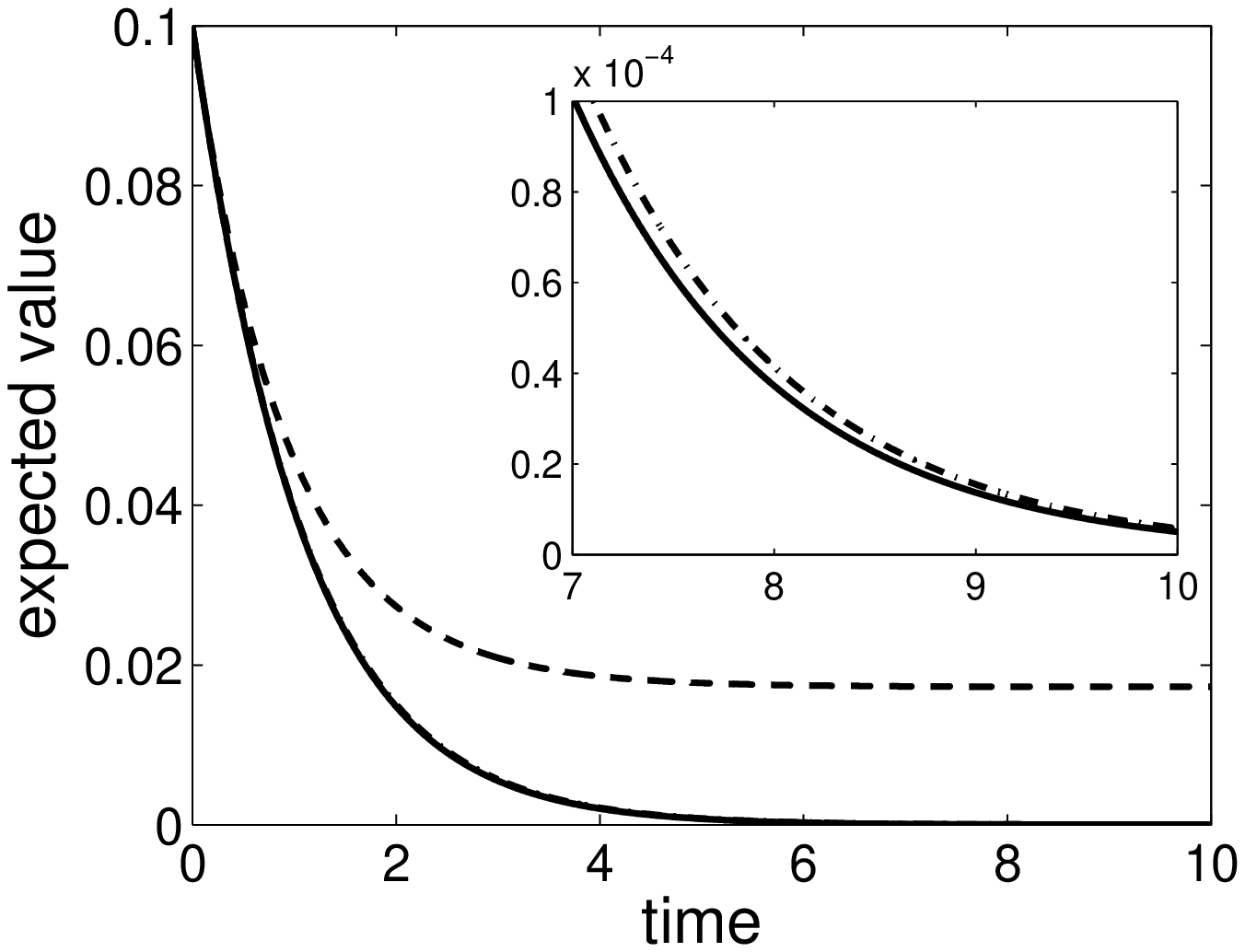}
\caption{Voter-like model. The solution $y$ of the mean-field equation \eqref{meanfvot} (continuous curve), the first coordinate $z_1$ of the solution of system \eqref{zsystem1vot}--\eqref{zsystem2vot} (dashed curve) and the expected value $y_1$ obtained from the solution of the master equation (dashed-dotted curve). Left panel: the case $D_2<0$, the values of $N$ are shown in the figure, the other parameter values are $\gamma =0.1$, $\tau=0.2$, $d=10$. Right panel: the case $D_2>0$, the parameter values are $N=200$, $\gamma =0.2$, $\tau=0.1$, $d=10$. The stationary part of the curves are enlarged in the inset. }\label{fig:vot}
\end{center}
\end{figure}

\begin{figure}[h!]
\begin{center}
\includegraphics[scale=0.4]{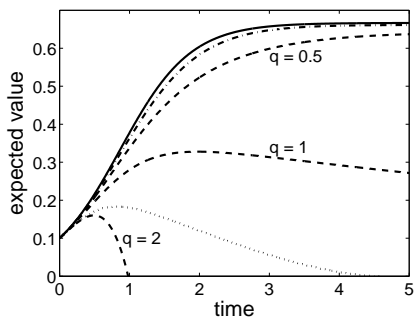}
\caption{Potential lower bounds of SIS model without airborne infection. The solution $y$ of the mean-field equation \eqref{meanfSISa} (continuous curve), the first coordinate $z_1$ of the solution of system \eqref{zsystem1SISq}--\eqref{zsystem2SISq} (dashed curve) for different values of $q$ shown in the figure, $z_1$ of the solution of system \eqref{zsystem1SISCS}--\eqref{zsystem2SISCS} (dotted curve) and the expected value $y_1$ obtained from the solution of the master equation (dashed-dotted curve). The parameter values are $N=100$, $\gamma =1$, $\tau=0.1$, $d=30$, and $\beta=0$. }\label{fig:SISq}
\end{center}
\end{figure}

\section{Discussion}

We started from the master equation of a one-step process, assumed that the coefficients are density dependent, and the functions $A$ and $C$ are polynomials. Then under certain sign condition on the coefficients of the polynomials we proved that the mean-field equation yields an upper bound for the expected value of the process. We constructed an auxiliary system for the artificially defined functions $z_j$, and proved that $z_1$ is a lower bound. We showed several examples, where the upper and lower bounds are close to each other, hence the method can be used to approximate the expected value without solving the large system of master equations or using simulation, which only gives probabilistic guarantees.

Two avenues for future research are relaxing the sign condition \eqref{sign} and improving the lower bound.
It is easy to see that in the case $D_j \geq 0$ for $j\geq 2$ and following the argument in Step 2 that the mean field equation yields a lower bound. Also in the voter-like model, a violation of the sign condition (i.e., when $\gamma$, the rate of switching to opinion 0, is greater than $\tau$, the rate of switching to opinion 1) can be dealt with by switching the labels of the opinions or equivalently replacing $x$ by $1-x$.  Perhaps more general violations of the sign condition can be dealt with by considering the convexity or concavity of the entire polynomials $A(x)$ and $C(x)$ instead of by the signs of their coefficients.

The second avenue for future research is improving the lower bound. Unlike the case of SIS disease propagation with airborne infection, $\beta>0$, where the upper and lower bounds are very close to each other even for small system sizes, say $N=100$, we can see that for the case of regular SIS epidemic without airborne infection, $\beta=0$, and the voter-like model, that the lower bound may be quite far from the expected value for moderately large $N$, say $N=10^5$.

The reason that the lower bound veers off to 0 is that $z_2$ converges to a positive steady state and then the derivative of $z_1$ becomes negative after some time. This problem can be overcome by altering the differential equation of $z_1$. The term $D_2z_2$ could be changed to a term that contains also $z_1$ in order to prevent $z_1$ from becoming negative. The new term could be introduced by exploiting the fact that $z_j$ approximates the $j$-th moment $y_j$ and this function can be approximated by $y^j$. This suggests that $z_2$ can be approximated by $z_1^2$, or in other words, $z_1$ approximates $\sqrt{z_2}$. In order to tune the approximation we introduce the artificial parameter $q\in [0,2]$ and change the term  $D_2z_2$ to $D_2z_2^{q/2} z_1^{2-q}$. For $q=0$ we obtain the mean-field upper bound and for $q=2$ we get back the original lower bound. In the quadratic case the modified differential equations for the lower bound of the SIS epidemic \eqref{zsystem1SISa}--\eqref{zsystem2SISa}
without airborne infection, $\beta=0$, are
\begin{align}\label{zsystem1SISq}
z_1'&=D_0 + D_1 z_1 + D_2z_2^{q/2} z_1^{2-q} \\
\label{zsystem2SISq}
z_2'&=2D_0 z_2^{1/2} + 2D_1 z_2 +2 D_2 z_2^{3/2} +\frac{c}{N}.
\end{align}
In Figure \ref{fig:SISq} the solution of this system is shown for different values of $q$ together with the solution of the mean-field equation and the expected value. The curve for $q=0.5$ is close to the mean-field upper bound (i.e., $q=0$) and at least appears to be a valid lower bound for $y_1$.

An alternate approach for an improved lower bound modifies the differential equation for $z_2$ \eqref{zsystem2} by replacing the $z_2^{3/2}$ term by $z_2^2/z_1$ resulting in the system
\begin{align}\label{zsystem1SISCS}
z_1'&=D_0 + D_1 z_1 + D_2z_2 \\
\label{zsystem2SISCS}
z_2'&=2D_0 z_2^{1/2} + 2D_1 z_2 +2 D_2 z_2^2/z_1 +\frac{c}{N}.
\end{align}
By Jensen's inequality, $y_2^{3/2}\leq y_3$.
Thus assuming $z_2\approx y_2$, the $z_2^{3/2}$ term can be interpreted as a lower bound for a $y_3$ term.  Now we motivate the $z_2^2/z_1$ term. Using the Cauchy-Schwarz inequality, $y_2^2\leq y_3y_1$, leads to a lower bound $y_2^2/y_1\leq y_3$ that appears to be tighter than $y_2^{3/2}$.  Assuming $z_1\approx y_1$ and $z_2\approx y_2$ then motivates using $z_2^2/z_1$ instead of $z_2^{3/2}$.  From Figure \ref{fig:SISq} it appears to be a valid lower bound for $y_1$ and closer to the mean-field upper bound than the original lower bound.
Of course these potential lower bounds \eqref{zsystem1SISq}--\eqref{zsystem2SISq} and \eqref{zsystem1SISCS}--\eqref{zsystem2SISCS} are only justified by a numerical experiment (Figure \ref{fig:SISq}) and some intuition.  It is an open question whether these are provable lower bounds or not.


\begin{thebibliography}{99}

\bibitem{ArBe}B. Armbruster, E. Beck, An elementary proof of convergence to the mean-field equations for an epidemic model, arXiv:1501.03250

\bibitem{Barratetal}
A. Barrat, M. Barth\'elemy, A. Vespignani, \textit{Dynamical processes on complex networks}, Cambridge University Press, Cambridge, 2008.

\bibitem{BaKiSiSi}
A. B\'atkai, I. Z. Kiss, E. Sikolya, P. L. Simon, Differential equation approximations of stochastic network processes: an operator semigroup approach, \textit{Netw. Heter. Media}, \textbf{7} (2012), 43--58.

\bibitem{Danon}
L. Danon, A. P. Ford, T. House, C. P. Jewell, M. J. Keeling, G. O. Roberts, J. V. Ross, M. C. Vernon, Networks and the Epidemiology of Infectious Disease, \textit{Interdisciplinary Perspectives on Infectious Diseases}, \textbf{2011:284909} special issue ``Network Perspectives on Infectious Disease Dynamics" (2011).

\bibitem{DiekmannHeesterbeek}
O. Diekmann, J. A. P. Heesterbeek, \textit{Mathematical epidemiology of infectious diseases: model building, analysis and interpretation}, John Wiley \& Sons Ltd, Chichester, UK, 2000.

\bibitem{EthierKurtz}
S. N. Ethier, T. G. Kurtz, \textit{Markov Processes: Characterization and Convergence}, John Wiley \& Sons Ltd, USA, 2005.

\bibitem{Hale}
J. Hale, \textit{Ordinary Differential Equations}, Dover Publications, New York, 2009.

\bibitem{HolleyLiggett}
R. Holley, T. Liggett, \textit{Ergodic theorems for weakly interacting infinite systems and the voter model}, {Ann. Probab.} \textbf{3} (1975) 643--663.

\bibitem{Kurtz1970}
T. G. Kurtz, Solutions of ordinary differential equations as limits of pure jump Markov processes, \textit{J. Appl. Prob.}, \textbf{7} (1970), 49--58.

\bibitem{akckcikk}
N. Nagy, I. Z. Kiss, P. L. Simon, Approximate master equations for dynamical processes on graphs, \textit{Math. Model. Nat. Phenom.}, \textbf{9} (2014), 32--46.

\bibitem{Nekovee}
M. Nekovee, Y. Moreno, G. Bianconi, M. Marsili, M., Theory of rumour spreading in complex social networks, \textit{Physica A}, \textbf{374} (2007), 457--470.

\bibitem{Newmanetal}
M. E. J. Newman, A.-L. Barab\'asi, D. J. Watts, \textit{The structure and dynamics of networks}, Princeton University Press, Princeton, NJ, 2006.

\bibitem{Ross}
Sheldon M. Ross, \textit{First Course in Probability, 9ed}, Pearson, Upper Saddle River, NJ, 2012.

\bibitem{SimonKissIMA}
P. L. Simon, I. Z. Kiss, From exact stochastic to mean-field ODE models: a new approach to prove convergence results, \textit{IMA J. Appl. Math.}, \textbf{78} (2013), 945--964.

\bibitem{SmithShahrezaei}
S. Smith, V. Shahrezaei, General transient solution of the one-step master equation in one dimension, \textit{Phys. Rev. E},  \textbf{91} (2015), 062119.

\bibitem{vanKampen}
N. G. van Kampen, \textit{Stochastic processes in physics and chemistry}, Elsevier, Amsterdam, 1992.



\end{thebibliography}
\end{document}